\def\bZ{\mathbb{Z}}
\newcommand{\ind}[1]{\mathbf{1}{\{ #1 \}}}
\newtheorem{thm}{Theorem}
\numberwithin{thm}{section}
\newtheorem{lemma}[thm]{Lemma}
\newtheorem{prop}[thm]{Proposition}
\theoremstyle{remark}
\newtheorem{remark}[thm]{Remark}
\theoremstyle{definition}
\newtheorem{define}[thm]{Definition}
\title{\bf Stretched exponential decay for subcritical parking times on $\mathbb{Z}^d$}
\author{Michael Damron}
\author{Hanbaek Lyu}
\author{David Sivakoff}
\date{}
\begin{document}

\maketitle

\begin{abstract}In the parking model on $\mathbb{Z}^d$, each vertex is initially occupied by a car (with probability $p$) or by a vacant parking spot (with probability $1-p$). Cars perform independent random walks and when they enter a vacant spot, they park there, thereby rendering the spot occupied. Cars visiting occupied spots simply keep driving (continuing their random walk). It is known that $p=1/2$ is a critical value in the sense that the origin is a.s.~visited by finitely many distinct cars when $p<1/2$, and by infinitely many distinct cars when $p\geq 1/2$. Furthermore, any given car a.s.~eventually parks for $p \leq 1/2$ and with positive probability does not park for $p > 1/2$. We study the subcritical phase and prove that the tail of the parking time $\tau$ of the car initially at the origin obeys the bounds
\[
\exp\left( - C_1 t^{\frac{d}{d+2}}\right) \leq \mathbb{P}_p(\tau > t) \leq \exp\left( - c_2 t^{\frac{d}{d+2}}\right)
\]
for $p>0$ sufficiently small. For $d=1$, we prove these inequalities for all $p \in [0,1/2)$. This result presents an asymmetry with the supercritical phase ($p>1/2$), where methods of Bramson--Lebowitz imply that for $d=1$ the corresponding tail of the parking time of the parking spot of the origin decays like $e^{-c\sqrt{t}}$. Our exponent $d/(d+2)$ also differs from those previously obtained in the case of moving obstacles.
\end{abstract}

\section{Statement of result}
In this note, we study the tail of the distribution of the parking time of cars in the subcritical phase for the parking model on $\mathbb{Z}^d$.  The model was introduced in \cite{DGJLS} and studied further in \cite{PRS} (a similar continuous-time particle system was introduced and studied earlier in \cite{CRS14, CRS18}), and is roughly defined as follows. Each vertex is initially assigned either a car (with probability $p \in [0,1]$) or a vacant parking spot (with probability $1-p$), independently between vertices. Cars perform simple symmetric discrete-time random walks, and when a car visits a vacant spot, it parks there. (If multiple cars visit the same vacant spot, the tie is broken using independent uniform $[0,1]$ random variables.) If a car visits an occupied spot, it does not park, and moves at the next timestep. (We refer the reader to~\cite{DGJLS} for a more precise definition of the process. Our probability space is defined in Section~\ref{sec:lower bound}.)

This parking model is a type of annihilating particle system, and can be described with the short-hand rule $A+B \to 0$, where cars are thought of as $A$ particles, spots are $B$ particles, and $A$ and $B$ particles annihilate each other upon collision. There is a large literature of such annihilating systems, and we only mention a few references including some for annihilating random walk \cite{A,EN,G} and for two-type annihilating random walk \cite{BL}. From the other side, the parking model can be viewed as a generalization of the study of parking functions, which are certain types of hashing functions resulting from particles parking in order on a line (see \cite{KW} and generalizations to trees in \cite{GP}).

In \cite{CRS14, CRS18, DGJLS} it was shown that there is a phase transition in $p$: a.s.~the origin is visited by finitely many distinct cars for $p<1/2$ (subcritical phase) and infinitely many for $p\geq 1/2$ (critical and supercritical phases). Furthermore, the parking time for the car of the origin (if it exists) is finite a.s.~for $p \leq 1/2$, and infinite with positive probability when $p > 1/2$. Our main result addresses the subcritical phase and determines the decay rate of the distribution of the parking time of cars. To state it precisely, we use the following notation: for each $v\in \mathbb{Z}^d$, the parking time $\tau^{(v)}$ is  
\begin{align}\label{def:liftspan}
\tau^{(v)} := \sum_{s=1}^{\infty} \ind{ \text{a car starts at $v$ and is unparked at time $s-1$}}.
\end{align} 
When $v=0$ we simply write $\tau$ for $\tau^{(0)}$. Note that if a car does not start at $v$, the convention taken is that $\tau^{(v)}=0$. We also write $\mathbb{P}_p$ for the probability measure associated to the parking model with parameter $p$.

We will prove here:
\begin{thm}\label{thm: main_thm}
For any $p > 0$, there exists $C_1>0$ such that
\[
\mathbb{P}_p(\tau > t) \geq \exp\left( - C_1 t^{\frac{d}{d+2}}\right) \text{ for all integers } t \geq 1.
\]
For 
\[
p_0 =
\begin{cases}
\frac{1}{2} & \quad \text{if } d =1 \\
\frac{1}{2} \left[ 1- \sqrt{1-(de)^{-2}}\right] & \quad \text{if } d \geq 2,
\end{cases}
\] 
if $p \in [0,p_0)$, then for some $c_2 = c_2(p)>0$,
\[
\mathbb{P}_p(\tau > t) \leq p\exp\left( -c_2 t^{\frac{d}{d+2}}\right) \text{ for all integers } t \geq 1
\]
where $c_2(p)$ is bounded away from $0$ on every closed subinterval of $[0,p_0)$.
\end{thm}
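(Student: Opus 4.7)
The plan is to prove the lower and upper tail bounds separately, with the same length scale $R = R(t) := \lceil t^{1/(d+2)} \rceil$ playing the central role in both. This is the scale at which the cost of confining a simple random walk in $\mathbb{Z}^d$ to a box of side $R$ for time $t$ (of order $t/R^2$) balances the volumetric configuration cost (of order $R^d$), and produces the Donsker--Varadhan exponent $d/(d+2)$.

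For the lower bound, I would implement a clearing construction. Consider the two events: $E_1$, that every site in the discrete ball $B_R(\mathbf{0})$ is initially a car; and $E_2$, that the simple random walk driving the origin car stays inside $B_R(\mathbf{0})$ for all times $s \in \{0,1,\ldots,t\}$. These two events are independent, being measurable with respect to disjoint coordinates of the underlying product space, and on their intersection the origin car only encounters car-source vertices inside $B_R(\mathbf{0})$ and so cannot park, forcing $\tau > t$. The bound $\mathbb{P}_p(E_1) \geq p^{|B_R(\mathbf{0})|} \geq \exp(-C R^d)$ together with a standard small-ball lower bound $\mathbb{P}_p(E_2) \geq \exp(-C' t/R^2)$ then delivers $\exp(-C_1 t^{d/(d+2)})$ at the stated order of $R$.

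For the upper bound, I would first build the parking process on a product space in which the initial types, the walk increments of every potential car, and the tie-breaking uniforms are mutually independent, so that the origin car's walk $X = (X_s)_{s \geq 0}$ is independent of every other car's walk and of the initial configuration. On $\{\tau > t\}$, each visited vertex $X_s$ is either an initially-car-type vertex or an initially-vacant spot that has been parked upon by some distinct other car strictly before time $s$. I would then split according to whether $X$ stays inside $B_R(\mathbf{0})$. The exit contribution is at most $\exp(-Ct/R^2)$ by standard SRW exit estimates. In the stay case, typical realizations of $X$ produce a range containing of order $R^d$ initially-vacant vertices, each of which must be pre-parked by a distinct other walker arriving before the origin walker does; the probability of this would be estimated by a path-counting / disjoint-occurrence bound over assignments of other cars to these vacant targets, in which each assignment carries a factor $p$ for the assigned car to exist, a factor $1-p$ for the target to be initially vacant, and a random-walk weight controlled via Stirling by $(2d)^n/n!$ for an $n$-step transit. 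Summing collapses this into a geometric series with common ratio $\leq 2de\sqrt{p(1-p)}$, which is precisely $<1$ when $p < p_0$ in dimensions $d \geq 2$, producing the required $\exp(-c_2 t^{d/(d+2)})$ with $c_2(p)$ bounded away from $0$ on closed sub-intervals of $[0,p_0)$.

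The main obstacle is to make the stay-inside estimate rigorous in the presence of the dependencies introduced by the tie-breaking: the ``pre-parking'' events for distinct vacant targets are not independent, because the same potential other car could in principle be used to park at several targets. I would address this through a disjoint-occurrence-style argument in which each required pre-parking is witnessed by a distinct pair (starting vertex, random-walk path), converting the coupled problem into a product-measure union bound that matches the generating-function computation above. In one dimension, the SRW is recurrent, every walker near $B_R(\mathbf{0})$ essentially certainly visits it within $O(R^2)$ time, and a direct comparison with Bernoulli trapping handles the full range $p \in [0, 1/2)$; the transience of the walk in $d \geq 2$ is precisely what forces the more restrictive threshold $p_0 < 1/2$ appearing in the statement.
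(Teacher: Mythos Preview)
Your lower bound is the same clearing-and-confinement argument the paper uses, so that half is fine.

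The upper bound, however, diverges from the paper and has a real gap. First, your stay/exit dichotomy is inverted: the event whose probability is $\le \exp(-Ct/R^{2})$ is that the walk \emph{stays} inside $B_{R}(\mathbf 0)$ for time $t$, not that it exits. Fixing the labels does not rescue the argument, because on the complementary (typical) event that the walk exits $B_{R}$ you have no control on the size of its range: in $d\ge 2$ the walk can leave $B_{R}$ along a thin tube and visit only $O(R)\ll R^{d}$ sites, so the ``order $R^{d}$ initially-vacant vertices'' premise fails. What you actually need is a bound, uniform over all connected $H\subset\mathbb{Z}^{d}$ with $|H|=n$, on $\mathbb{P}(X_{0},\dots,X_{t}\in H)$ of the form $\exp(-ct\,n^{-2/d})$. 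This is exactly the nontrivial input the paper supplies in its exit-time proposition, proved via the Levine--Peres spectral isoperimetric inequality $\sup_{|K|=n}\mathbb{E}\,\mathfrak t_{K}\le Cn^{2/d}$ together with an eigenvalue argument relating $\mathbb{P}(\mathfrak t_{H}>t)$ to the top eigenvalue of the sub-stochastic transition matrix on $H$.

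Second, the paper bypasses your ``pre-parking'' difficulty entirely. Instead of tracking which other cars reached which spots first and appealing to a disjoint-occurrence inequality, it invokes the busy-subgraph lemma from \cite{DGJLS}: on $\{\tau>t\}$ there deterministically exists a connected $H$ containing the trajectory $\{X_{0},\dots,X_{t}\}$ whose initial configuration has at least as many cars as spots. This reduces the dynamic question to a static one about the initial types plus confinement of $X$. A union bound over lattice animals of size $j$ containing the origin (at most $(2de)^{j}$ of them, each busy with probability $\le (2\sqrt{p(1-p)})^{j}$), combined with the exit-time proposition and a split at $j\asymp t^{d/(d+2)}$, yields the upper bound. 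Your BK-style scheme might be salvageable, but as written it is a sketch of the hardest step; the paper's route avoids that step altogether and replaces it with the combinatorial busy-subgraph lemma plus the Levine--Peres estimate, neither of which appears in your proposal.
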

\begin{remark}
For $p < p_0$ we obtain $\mathbb{E}_p\tau < \infty$ and consequently, by a mass transport argument (Lemma~\ref{lem:tau-V_duality} below) $\mathbb{E}_pV<\infty$, where $V$ is the total number of visits to the origin by cars. In \cite{DGJLS}, this was shown for $p < (256d^6e^2)^{-1} \asymp d^{-6}$. Our value of $p_0 \asymp d^{-2}$ improves on this bound. 
\end{remark}

\begin{remark}
The key ingredient in our proof is  Proposition~\ref{RW_exit_time}, which gives an exponential tail bound on the exit time of a random walk from a connected subset of $\mathbb{Z}^d$ of size $n$ that is uniform over all such subsets. This may be of independent interest, and our proof relies on the first moment bound of~\cite{LP} and spectral bounds in \cite[Cor.~6.9.5,~6.9.6]{LL}.
\end{remark}

\begin{remark}
We expect similar bounds to hold in the continuous-time diffusion-limited annihilating system (DLAS) with non-moving $B$ particles, wherein $A$ particles move as continuous-time simple, symmetric random walks. 
(An analogue to the ``busy subgraph lemma'' (Lemma~\ref{busy_subgraph}) would need to be proved for the continuous time model, which would no longer hold with a deterministic upper bound on the size of $H$. An upper bound with exponential tails would suffice, and should be true.) 
In that model, if both $A$ and $B$ particles perform continuous-time, simple, symmetric random walks with equal jump rates, then Bramson-Lebowitz \cite{BL} have shown that the density of $A$ particles decays to zero like $e^{-g_d(t)}$, with 
$$g_d(t) \asymp
\begin{cases}
\sqrt{t} & d=1\\
t/\log t & d=2\\
t & d\ge3,
\end{cases}
$$
when $p<1/2$ (in fact, their results are stronger than this). By a straightforward mass-transport argument, this is the same as the decay rate of the tail of the annihilation time distribution of a single $A$ particle. Therefore the above result indicates a clear difference between the case of non-moving $B$-particles and moving $B$-particles.
\end{remark}

Our result implies there is an asymmetry between stationary (spots) and mobile particles (cars) due to an argument of \cite{BL}, at least for $d=1$. When $d=1$, Theorem~\ref{thm: main_thm} states that when $p<1/2$ (so cars are in the minority), the distribution of the parking time of a car decays like $e^{-ct^{1/3}}$. In contrast, Theorem~\ref{thm: p>1/2} below says that when $p>1/2$ (spots are in the minority), the distribution of the parking time of a spot decays like $e^{-c\sqrt{t}}$.
\begin{thm}\label{thm: p>1/2}
Let $\sigma$ be the time at which the parking spot at the origin is parked in by a car (where $\sigma=0$ if there is no such spot). Suppose $d=1$ and $p>1/2$. Then there exist constants $a,b>0$ such that
$$
e^{-a\sqrt{t}}\le \mathbb{P}_p(\sigma>t)\le e^{-b\sqrt{t}} \qquad \text{ for all $t\ge 1$.}
$$
\end{thm}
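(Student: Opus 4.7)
\emph{Lower bound.} Both directions work at the diffusive scale $L\asymp\sqrt t$ natural for one-dimensional simple random walk. For the lower bound I would take $L=\lceil\sqrt t\,\rceil$ and condition on the event $E$ that every site in $[-L,L]$ is initially a vacant spot; this has probability $(1-p)^{2L+1}\geq e^{-a_1\sqrt t}$. On $E$, any car visiting $0$ within time $t$ must originate from some site $|x|>L$, and since parking can only delay a car's arrival at $0$, it suffices to lower-bound the probability that no \emph{free} simple random walker (placed at $x$ with probability $p$, independently over $|x|>L$) visits $0$ in $[0,t]$. The expected number of such visitors is $\sum_{|x|>L}p\,\mathbb{P}_x(T_0\leq t)=\Theta(\sqrt t)$ by the reflection principle and local CLT. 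The independence of the visit events across $x$ and the elementary bound $1-u\geq e^{-2u}$ on $[0,1/2]$ give $\mathbb{P}(\text{none visit }0)\geq e^{-a_2\sqrt t}$; multiplying by $\mathbb{P}_p(E)$ proves $\mathbb{P}_p(\sigma>t)\geq e^{-a\sqrt t}$.

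\emph{Upper bound.} For the upper bound I would run a Bramson--Lebowitz-style discrepancy argument. Pick a small $c_0>0$, set $L=\lceil c_0\sqrt t\,\rceil$, and let $A,B$ count the cars and spots in $[-L,L]$ at time $0$. Since $A-B$ is a sum of i.i.d.\ bounded variables with mean $(2p-1)(2L+1)>0$, Hoeffding gives $\mathbb{P}_p(A-B<(p-\tfrac12)(2L+1))\leq e^{-c_1 L}\leq e^{-b_1\sqrt t}$. On the complementary event there are at least $c_2L$ ``excess'' cars in $[-L,L]$. A standard hitting-time estimate for SRW shows that each car starting in $[-L,L]$ visits $0$ in $[0,t]$ under its free-walk trajectory with probability at least a constant $q=q(c_0)>0$, so a Chernoff bound on these independent events produces at least $c_3L$ distinct free-walk visitors of $0$ with probability $\geq 1-e^{-c_4\sqrt t}$. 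Finally, an adaptive coupling between the parking and the free-walk system shows that each vacant spot can ``divert'' at most one free-walk visitor by parking it en route; the total number of spots in the range of the relevant walks is at most $C(1-p)L/c_0+O(\sqrt L)$ with high probability, so tuning $c_0$ so that the free-walk visitor count exceeds the divertable-spot count forces at least one car to reach $0$ in the parking dynamics itself, yielding $\sigma\leq t$.

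\emph{Main obstacle.} The delicate step is the ``at-most-one-diversion-per-spot'' claim in the upper bound: it requires an adaptive coupling that reveals parking events in temporal order and matches each one to (at most one) diverted free-walk trajectory, essentially the bookkeeping of \cite{BL} for the two-species annihilating model, somewhat simplified here because only one species moves. One also has to verify that the arithmetic (free-walk visitors strictly greater than divertable spots) can be made to close uniformly for $p\in(\tfrac12,1]$, which will likely require a refined count tracking only those spots lying between a car's starting position and $0$ rather than all spots in the random-walk range; the resulting constant $b$ will depend on $p$ and is expected to degenerate as $p\downarrow\tfrac12$, mirroring the degeneracy of the subcritical constant $c_2(p)$ in Theorem~\ref{thm: main_thm} as $p\uparrow p_0$.
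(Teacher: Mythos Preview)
Your lower bound is essentially the paper's argument: force $[-L,L]$ with $L\asymp\sqrt t$ to be all spots, then bound the chance that no outside trajectory reaches $0$ by time $t$ via the free walks. (Minor point: $1-u\ge e^{-2u}$ fails for $u$ close to $1$, which occurs when $p$ is close to $1$; replace $2$ by $-\log(1-p)/p$ and the argument goes through.)

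The upper bound, however, has a real gap in the counting step. You need
\[
(\text{free-walk visitors of }0\text{ from }[-L,L])\;>\;(\text{spots able to divert them}),
\]
but the right side cannot be held at order $\sqrt t$ with failure probability $e^{-b\sqrt t}$. A diverted car can park at any spot on its trajectory before first hitting $0$, so the relevant spot count lives on the union of those trajectories. With $\Theta(\sqrt t)$ walks of length $t$, a union bound forces the common range bound to be of order $t^{3/4}$ to get failure probability $e^{-b\sqrt t}$; the number of spots there is $\Theta((1-p)t^{3/4})$, which swamps the $\Theta(p\sqrt t)$ visitors. Your proposed refinement---counting only spots between a car's start and $0$---does not rescue this: a walk from $x>0$ typically overshoots $x$ before hitting $0$ (the pre-$0$ maximum has infinite mean by gambler's ruin), so cars can and do park well to the right of their starting points. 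Restricting to cars and spots in $[-L,L]$ would make the arithmetic close, but then you must compare to the full model, and monotonicity fails in the needed direction because inside cars can park at outside spots.

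What makes the paper's (i.e.\ Bramson--Lebowitz's) argument work is not generic bookkeeping but a specific combinatorial device that your sketch does not capture. One defines ladder indices $c_1,c_2,\ldots$ of the discrepancy $D(j)=\#\{\text{cars in }[0,j]\}-\#\{\text{spots in }[0,j]\}$ (namely, $c_r$ is the first $j$ with $D(k)\ge r-1$ for all $k\ge j$), and pairs each non-ladder car in $(c_r,c_{r+1})$ with a spot to its right in the same interval. When the car currently carrying label $c_r$ parks at a paired spot, the label passes to the car paired with that spot, which lies strictly to the left. Hence the trajectory of the \emph{label} $c_r$ is a simple random walk punctuated only by leftward jumps, so it hits $0$ by time $t$ with probability bounded below by a constant whenever it starts in $[0,C\sqrt t]$---irrespective of how far individual cars wander. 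Survival of the spot at $0$ forces all $\Theta(\sqrt t)$ such labels to avoid $0$, giving $(1-c)^{c\sqrt t}$. This relabeling is precisely what decouples the argument from the walk ranges, and it is the missing ingredient in your proposal.
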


\begin{proof}[Sketch proof of Theorem~\ref{thm: p>1/2}]

The lower bound on the probability follows from an argument similar to the proof of the lower bound in Theorem~\ref{thm: main_thm}, given in Section~\ref{sec:lower bound}. That is, the spot at the origin survives beyond time $t$ if all vertices within distance $C\sqrt{t}$ of the origin are initially spots, and all random walk trajectories associated to vertices beyond this distance do not visit the origin by time $t$. This event has probability at least $e^{-a\sqrt{t}}$.

The upper bound is more complicated and follows closely the argument of \cite[Sec.~7,~p.~363-371]{BL}; we give a (very) brief sketch here. If the spot of the origin survives beyond time $t$, then one can identify approximately $\sqrt{t}$ many random walks (car trajectories) that must avoid the origin. Since each trajectory has at least a uniformly positive probability $c$ to reach the origin in time $t$, one obtains the upper bound $(1-c)^{c\sqrt{t}}$, which is of order $e^{-b\sqrt{t}}$. To define these random walks, one labels cars $c_1, c_2, \dots$ as follows. Write $D(j), j \geq 0$ for the initial number of cars in $[0,j]$ minus the initial number of spots. Let $c_1$ be the car at the first vertex $j$ such that $D(k) \geq 0$ for all $k \geq j$. Define $c_r$ similarly: the car at the first vertex $j$ such that $D(k) \geq r-1$ for all $k \geq j$. (These cars are a.s.~well-defined because $p>1/2$, and for some $c,C>0$, at least $c\sqrt{t}$ labeled cars initially lie in $[0,C\sqrt{t}]$ with probability at least $1-e^{-c\sqrt{t}}$.) One can then pair cars and spots in the intervals between the $c_r$'s: strictly between the initial locations of $c_r$ and $c_{r+1}$, proceeding from left to right, each car can be paired uniquely to a spot initially to its right that is the first such unpaired spot. (Note that some cars between the origin and the initial location of $c_1$ remain unpaired.) Now, as the parking process evolves, if a paired car parks in a spot that is paired to different car (necessarily to its left), then this other car is re-paired with the spot that had been paired with the first car. We focus now on the cars labeled $\{c_r\}$. If $c_r$ parks in a paired spot, then we reassign the label $c_r$ to the car that is paired with that spot. In this way, we see that except for finitely many of the $c_r$'s (which may park in the unpaired spots between the origin and the initial location of $c_1$), if the spot of the origin survives for time $t$, then none of the $c_r$'s can reach the origin by time $t$. Any $c_r$ that begins within distance $C\sqrt{t}$ of the origin has probability at least $c>0$ to reach the origin within time $t$ because the trajectory of $c_r$ follows a random walk with only negative drift (each reassignment moves a car strictly to the left). Therefore we find that the probability that the spot of the origin survives for at least time $t$ is at most $(1-c)^{c\sqrt{t}} + e^{-c\sqrt{t}}$, as claimed.
\end{proof}


As a corollary to Theorem~\ref{thm: main_thm}, we also derive the small $p$ asymptotic for the expected total number of visits to the origin by cars. We say that a car \textit{visits} $x$ at time $t\ge1$ if it is unparked at time $t-1$ and moves to $x$ at time $t$ (it may or may not park at $x$). Each time a car visits the site $x$ is referred to as a \textit{visit}. Let $V_t$ denote the number of visits by all cars to the origin, $0$, through time $t$. Multiple cars may visit $0$ at the same time, and a car may visit $0$ at multiple times -- these are all counted as distinct visits. Let $V = \lim_{t\to\infty} V_t$ denote the number of visits to the origin for all time. The asymptotic behaviors of $\mathbb{E}_{1/2} V_t$ and $\mathbb{E}_p V$ as $t\to\infty$ and $p\uparrow 1/2$, respectively, are the subjects of~\cite{JJLS,PRS} on $\mathbb{Z}^d$. While it remains an open question whether $\mathbb{E}_pV<\infty$ for all $p<1/2$ and $d\ge 2$, here we give the small $p$ asymptotic for $\mathbb{E}_p V$ in every dimension.

\begin{thm}\label{thm:total_visits}
For all $p\in [0,1]$, we have $\mathbb{E}_pV \ge p+p^2$. As $p\to 0$ we have
$$
\mathbb{E}_p V = p + O\left(p^2 (\log p^{-1})^{(d+2)/d}\right),
$$
where the constant in the $O$ term depends on $d$.
\end{thm}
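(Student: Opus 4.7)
The plan is to combine the mass-transport identity $\mathbb{E}_p V = \mathbb{E}_p \tau$ of Lemma~\ref{lem:tau-V_duality} with the stretched-exponential tail from Theorem~\ref{thm: main_thm} to obtain matching upper and lower bounds on $\mathbb{E}_p \tau$, and hence on $\mathbb{E}_p V$.

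For the lower bound $\mathbb{E}_p V \geq p + p^2$, valid for every $p\in[0,1]$, I would estimate only the first two tail terms:
\[
\mathbb{E}_p\tau \;\geq\; \mathbb{P}_p(\tau\geq 1) + \mathbb{P}_p(\tau\geq 2).
\]
The first probability equals $p$, since $\tau\geq 1$ iff a car starts at the origin. For the second, I would use the event that a car starts at the origin \emph{and} the (uniformly chosen) neighbor that this car steps to at time $1$ was also initially occupied by a car. Since a vertex that initially hosts a car never houses a vacant spot, the origin's car cannot park at time $1$ on this event, forcing $\tau\geq 2$. By independence of the initial configuration between distinct vertices, this event has probability exactly $p^2$.

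For the upper bound I would write $\mathbb{E}_p\tau = p + \sum_{k\geq 2}\mathbb{P}_p(\tau\geq k)$ and split the sum at a threshold $T=T(p)$. For small $k$ the correct bound is $O(p^2)$: by conditioning on a car being at the origin and analyzing the single time step, the car's target neighbor $u$ fails to be a vacant spot at time $1$ only if either (i) $u$ was initially a car (probability $p$), or (ii) $u$ was initially a spot but some competing car from one of its $2d-1$ other neighbors also moves to $u$ at time $1$ and wins the tie-break (union-bound probability at most $(2d-1)p/(2d)\leq p$). This yields $\mathbb{P}_p(\tau\geq 2)\leq 2p^2$, and hence by monotonicity $\mathbb{P}_p(\tau\geq k)\leq 2p^2$ for every $k\geq 2$. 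For large $k$, I would invoke the tail bound $\mathbb{P}_p(\tau>k-1)\leq p e^{-c_2(k-1)^{d/(d+2)}}$ of Theorem~\ref{thm: main_thm} together with an integral comparison to obtain
\[
\sum_{k>T}\mathbb{P}_p(\tau\geq k) \;\leq\; C\,p\,T^{2/(d+2)}\,e^{-c_2 T^{d/(d+2)}}.
\]

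Choosing $T = (c_2^{-1}\log p^{-1})^{(d+2)/d}$ so that $\exp(-c_2 T^{d/(d+2)})=p$ balances the two halves: the small-$k$ piece contributes $2p^2 T = O(p^2(\log p^{-1})^{(d+2)/d})$, and the large-$k$ piece contributes $O(p^2 (\log p^{-1})^{2/d})$, both of the required order. Here one uses that by Theorem~\ref{thm: main_thm} the constant $c_2$ is bounded away from $0$ uniformly on any closed subinterval of $[0,p_0)$, so $c_2$ may be taken independent of $p$ as $p\to 0$. The step requiring the most care is the small-$k$ estimate $\mathbb{P}_p(\tau\geq 2)\leq 2p^2$, which rests on careful accounting of the one-step dynamics and of the simultaneous-arrival tie-break; the remaining work is a routine split-and-optimize computation.
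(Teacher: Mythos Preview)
Your proposal is correct and follows essentially the same route as the paper: both use the mass-transport identity of Lemma~\ref{lem:tau-V_duality}, bound $\mathbb{P}_p(\tau\geq 2)$ above and below by quantities of order $p^2$, split the tail sum at a threshold of order $(\log p^{-1})^{(d+2)/d}$, and control the large-$k$ piece via the stretched-exponential tail of Theorem~\ref{thm: main_thm}. The only cosmetic differences are that the paper uses the cruder bound $\mathbb{P}_p(\tau>1)\leq 4d^2p^2$ (via ``some other car within $\ell^1$-distance $2$'') in place of your $2p^2$, and takes the threshold with an extra factor of $2$ so that the large-$k$ piece is $O(p^3\log p^{-1})$ rather than $O(p^2(\log p^{-1})^{2/d})$; neither change affects the final $O(p^2(\log p^{-1})^{(d+2)/d})$ estimate.
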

Essentially, this says that for small $p$ the origin is most likely never visited by a car, or else it is visited once by a car that is initially adjacent to the origin.

\section{Proof of Theorem~\ref{thm: main_thm}}

To formalize the model, we recall from \cite[Sec.~2]{DGJLS} that our space is
\[
\Omega = \left( \{-1,1\} \times (\mathbb{Z}^d)^\mathbb{N} \times [0,1]^\mathbb{N}\right)^{\mathbb{Z}^d},
\]
with probability measure $\mathbb{P}_p$ under which all coordinates are independent, and for each $v \in \mathbb{Z}^d$, the three components are distributed as follows. The first coordinate is a random variable with probability $p$ to be $1$ (if there is a car initially at $v$) and probability $1-p$ to be $-1$ (otherwise). The second is a simple symmetric random walk started at $v$, which is the path that an unparked car placed at $v$ will follow (this path continues past the parking time). The third is a sequence of i.i.d.~uniform $[0,1]$ random variables to break ties if multiple cars arrive at the same parking spot at the same time (the car that parks is the one with the largest value of uniform variable whose index corresponds to the present time).

\subsection{Lower bound}\label{sec:lower bound}
We first prove the easier inequality, the lower bound. The idea is to force a large box centered at the origin initially to contain only cars, and for the car initially at the origin to stay in this box until time $t$. Optimizing the size of this box gives the bound. For any integer $M>0$, let $A_M$ be the event that for all $v \in [-M,M]^d$, the vertex $v$ initially has a car. If $A_M$ occurs, and the car starting at $0$ does not leave $[-M,M]^d$ by time $t$, then $\tau > t$. By independence of the initial particle locations and the random walk trajectories, we have
\begin{align*}
\mathbb{P}_p(\tau > t) &\geq \mathbb{P}_p(A_M,~\text{car starting at } 0 \text{ stays in }[-M,M]^d \text{ through time }t) \nonumber \\
&= (1-p)^{(2M+1)^d} \mathbb{P}_p(X \text{ stays in }[-M,M]^d \text{ through time }t). \label{eq: lower_bound_1}
\end{align*}
Here, $X = (X_t)$ is the random walk started at $0$.
By \cite[Cor.~6.9.5,~6.9.6]{LL}, the second factor is bounded below by $c_4 \exp\left( -C_3t/M^2\right)$,
%
so we obtain
\begin{align*}
\mathbb{P}_p(\tau > t) &\geq (1-p)^{(2M+1)^d} \left( c_4 \exp\left( - C_3 \frac{t}{M^2}\right) \right) \\
&= c_4 \exp\left( -C_3 \frac{t}{M^2} + (2M+1)^d \log (1-p)\right) \\
&\geq c_4 \exp\left( - C_5 \left( \frac{t}{M^2} + M^d\right)\right),
\end{align*}
where $C_5$ depends on $p$ and $d$. Last, we set $M = \lfloor t^{1/(d+2)}\rfloor$ to obtain the bound
\[
c_4 \exp\left( - C_5 \left( \frac{t}{\lfloor t^{\frac{1}{d+2}}\rfloor^2} + \lfloor t^{\frac{1}{d+2}}\rfloor^d\right)\right) \geq c_6 \exp\left( - C_7 t^{\frac{d}{d+2}}\right).
\]
By increasing $C_1$ in the statement of Theorem~\ref{thm: main_thm}, this proves the lower bound.

\subsection{Upper bound}
For the upper bound, we refine the ``busy subgraph'' method introduced in \cite{DGJLS}. Specifically, we combine it with consequences of a spectral isoperimetric bound of Levine-Peres. To begin, let us recall the definition of a busy subgraph.

\begin{define}
We call a finite subgraph $H\subset \mathbb{Z}^d$ \emph{busy} if $H$ is connected and there are at least as many cars as spots initially on $H$. 
\end{define}
As stated in \cite[p.~2111]{DGJLS}, for $p<1/2$ and any fixed connected subgraph $H$ of $\mathbb{Z}^d$ with $j$ vertices,
\begin{equation}\label{eq: busy_bound}
\mathbb{P}_p(H \text{ is busy}) \leq \left(2\sqrt{p(1-p)}\right)^j.
\end{equation}
We therefore, from this point on, restrict to $p<1/2$.

Busy subgraphs are important because if $\tau > t > 0$, we can always construct one that contains the trajectory of the car of the origin until time $t$, as stated in \cite[Lem.~4.12]{DGJLS} and reproduced below. In its statement, $\mathbb{B}(v,2t)$ is the subgraph of $\mathbb{Z}^d$ induced by the set of vertices within $\ell^1$-distance $2t$ of $v$.
\begin{lemma}\label{busy_subgraph}
Let $t\ge 1$ and $v \in \mathbb{Z}^d$. For each $\omega \in \{\tau^{(v)} > t\}$, there is a busy subgraph $H = H(\omega)$ such that $H \subseteq \mathbb{B}(v,2t)$ and $H$ contains the trajectory of the car started at $v$ up to time $t$.
\end{lemma}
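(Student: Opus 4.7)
The plan is to build $H$ by recursively absorbing the trajectories of ``blocking'' cars, starting from the trajectory of the car initially at $v$. Write $c_0$ for this car and $\pi_{c_0}$ for its trajectory on $[0,t]$. On $\{\tau^{(v)}>t\}$, every vertex $w$ that $c_0$ visits at some time $k\ge 1$ must be occupied at that moment, so if $w$ was initially a spot then some car $c \neq c_0$ has parked at $w$ by time $k$; I append the trajectory $\pi_c$ from $c$'s initial vertex up to its parking time $\sigma_c$. I iterate: for every initially-spot vertex on a newly appended trajectory that is not itself the parking spot of the corresponding car, I append the trajectory of whichever car is parked there. Each iteration introduces a distinct new car, so the procedure terminates. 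Let $H$ be the vertex set of the union of all trajectories produced.

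Properties (a) ``$H$ is connected'' and (b) ``$H$ is busy'' are straightforward. For (a), each appended trajectory is a nearest-neighbor path that shares its endpoint (the parking spot that triggered its inclusion) with the previously built portion of $H$. For (b), every initially-spot vertex $u \in H$ is the parking spot of exactly one traced car: either by construction, or else $u$ was visited while unparked by some traced trajectory, so $u$ was blocked and the responsible (parked) car was absorbed in a subsequent iteration. Since the initial vertices of the traced cars are pairwise distinct and all initial-car vertices lying in $H$, this gives an injection from $H\cap\{\text{spots}\}$ into $H\cap\{\text{cars}\}$.

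The main obstacle is (c) $H \subseteq \mathbb{B}(v,2t)$, since a naive bound allowing each recursion level to enlarge the radius by up to $t$ yields only $H \subseteq \mathbb{B}(v,(D+1)t)$ at depth $D$. The key observation is a parking-time monotonicity: along any chain $c_0, c_1, \dots, c_D$ in the recursion tree --- where $c_i$ parks at a vertex $w_i$ lying on $\pi_{c_{i-1}}$ at time $j_i$ (measured on $\pi_{c_{i-1}}$'s clock) --- one has $\sigma_{c_i} \le j_i$, since $w_i$ must already be occupied by $c_i$ by the moment $c_{i-1}$ visits it. For any vertex $y$ at time $n_D$ on $\pi_{c_D}$, walking the chain $v \to w_1 \to w_2 \to \cdots \to w_D \to y$ and noting that each hop $w_i \to w_{i+1}$ retraces $\pi_{c_i}$ from time $\sigma_{c_i}$ back to time $j_{i+1}$ yields, by the triangle inequality,
\[
d(v,y) \;\le\; j_1 + \sum_{i=1}^{D-1}\bigl(\sigma_{c_i}-j_{i+1}\bigr) + \bigl(\sigma_{c_D}-n_D\bigr).
\]
Substituting $\sigma_{c_i}\le j_i$ telescopes the right-hand side to $2j_1 - n_D \le 2t$, establishing (c).
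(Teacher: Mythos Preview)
The paper does not supply its own proof of this lemma; it is quoted from \cite[Lem.~4.12]{DGJLS}. Your argument---recursively appending the full trajectory of each car whose parking spot is encountered, then using the monotonicity $\sigma_{c_i}\le j_i$ to telescope the distance bound to $2j_1-n_D\le 2t$---is precisely the construction in that reference, and it is correct.

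One point could be tightened: the sentence ``each iteration introduces a distinct new car, so the procedure terminates'' does not by itself give finiteness, since a priori infinitely many distinct cars might be available. Termination follows from the same monotonicity you already use for (c): in fact $\sigma_{c_i}\le j_i<\sigma_{c_{i-1}}$ (the spot $w_i\neq w_{i-1}$ cannot be visited by $c_{i-1}$ at its own parking time $\sigma_{c_{i-1}}$), so every chain in the recursion tree has length at most $t$, and each traced trajectory contributes at most $t+1$ vertices, hence finite branching. The recursion tree is therefore finite.
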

Following \cite[p.~2111]{DGJLS}, this lemma is used as follows. We first note that $|\mathbb{B}(0,2t)|\le (4t+1)^d$. Letting $X = (X_{t})_{t\ge 0}$ be a simple symmetric random walk trajectory on $\bZ^d$ with $X_{0}=0$, we then apply \eqref{eq: busy_bound}, Lemma~\ref{busy_subgraph}, and a union bound to obtain for integers $t \geq 1$ that
\begin{align}
& \mathbb{P}_p(\tau> t) \nonumber \\
&\le \sum_{j=1}^{(4t+1)^d} \sum_{\substack{H\text{ connected}\\ |H|=j, 0\in H}} \left(2\sqrt{p(1-p)}\right)^{j} \, \mathbb{P}\left( \text{$X_k\in H$ for all $0\le k \le t$} \right) \nonumber \\
&=  \sum_{j=1}^{(4t+1)^d} \sum_{\substack{H\text{ connected}\\ |H|=j, 0\in H}} \left(2\sqrt{p(1-p)}\right)^{j} \, \mathbb{P}(\mathfrak{t}_H> t). \label{eq: beginning_of_end}
\end{align}
Here, $\mathfrak{t}_H$ is the exit time of $H$ by $X$,
\[
\mathfrak{t}_H = \inf\{t \geq 0 : X_t \notin H\},
\]
and we abuse notation by identifying $H$ with its vertex set, so $|H|$ is the number of vertices in $H$ and $0\in H$ indicates that $0$ is a vertex of $H$.

At this point, our strategy diverges from that of \cite{DGJLS} as we can more finely control the distribution of the exit time using the following new proposition. Again, although we use the measure $\mathbb{P}_p$, the bound does not depend on $p$.
\begin{prop}\label{RW_exit_time}
Let $(X_t)_{t\ge 0}$ be a simple symmetric random walk on $\bZ^d$ with $X_0=0$. There exists $c_8>0$ such that
\[
\mathbb{P}_p(\mathfrak{t}_H > t) \le \sqrt{n} \exp\left(-c_8tn^{-\frac{2}{d}}\right)
\]
for all integers $t\geq 1$ and $n\ge 1$, and all subgraphs $H$ of $\mathbb{Z}^d$ with $|H|=n$.
\end{prop}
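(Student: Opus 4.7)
The plan is to combine two ingredients: a spectral tail bound for exit times extracted from \cite[Cor.~6.9.5,~6.9.6]{LL}, and a Faber--Krahn type lower bound on the principal Dirichlet eigenvalue of $H$ provided by \cite{LP}. First, the case $0 \notin H$ is trivial since then $\mathfrak{t}_H = 0$, so I will assume $0 \in H$. Let $P_H$ be the sub-stochastic transition operator of the walk killed upon exiting $H$, acting on $\ell^2(H)$ with counting measure. Since the walk is reversible, $P_H$ is symmetric, so $I - P_H$ has an orthonormal eigenbasis $\{\phi_i\}_{i=1}^n$ with eigenvalues $0 < \lambda_1(H) \le \lambda_2 \le \dots \le \lambda_n$, the smallest being the principal Dirichlet eigenvalue of $H$.

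The first step is to derive $\mathbb{P}_0(\mathfrak{t}_H > t) \le \sqrt{n}\, e^{-\lambda_1(H)t}$ via a standard spectral computation, which is essentially the content of \cite[Cor.~6.9.5,~6.9.6]{LL}. Writing $p_t^H$ for the killed transition kernel, Parseval's identity combined with the elementary bound $(1-\lambda)^{2t} \le e^{-2\lambda t}$ gives
\[
p_{2t}^H(0,0) \;=\; \sum_{i=1}^n (1-\lambda_i)^{2t}\phi_i(0)^2 \;\le\; e^{-2\lambda_1(H)t}\sum_{i=1}^n \phi_i(0)^2 \;=\; e^{-2\lambda_1(H)t}.
\]
Combining this with Cauchy--Schwarz applied to $\mathbb{P}_0(\mathfrak{t}_H > t) = \sum_{y \in H} p_t^H(0,y) \le \sqrt{n}\,\|p_t^H(0,\cdot)\|_2$ and the reversibility identity $\|p_t^H(0,\cdot)\|_2^2 = p_{2t}^H(0,0)$ yields the stated bound, with the $\sqrt{n}$ prefactor arising from the conversion between $\ell^1$ and $\ell^2$ norms on $H$.

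The second step, and the main obstacle, is to show $\lambda_1(H) \ge c\, n^{-2/d}$ uniformly over all $H \subset \mathbb{Z}^d$ with $|H| = n$. This is where the Levine--Peres first-moment bound enters: \cite{LP} establishes $\max_{x \in H}\mathbb{E}_x[\mathfrak{t}_H] \le C\, n^{2/d}$ uniformly in $H$, the extremal shape being (an approximation to) a Euclidean ball, via a discrete isoperimetric inequality in $\mathbb{Z}^d$. For a reversible killed chain, $\max_x \mathbb{E}_x[\mathfrak{t}_H]$ and $1/\lambda_1(H)$ are comparable up to absolute constants, as one sees from the spectral expansion of the Green function $(I-P_H)^{-1}\mathbf{1}$ evaluated at the peak of the non-negative ground state $\phi_1$ (the $1/\lambda_1$ term dominates). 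Therefore the first-moment bound yields $\lambda_1(H) \ge c\, n^{-2/d}$. Substituting this eigenvalue bound into the tail estimate of step one gives the proposition with $c_8 = c$. The entire weight of the argument rests on the isoperimetric input from \cite{LP}; by contrast, the spectral calculation in step one is essentially routine.
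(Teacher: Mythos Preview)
Your proof is correct and follows essentially the same two-step structure as the paper: a spectral tail bound $\mathbb{P}_0(\mathfrak{t}_H>t)\le\sqrt{n}\,(1-\lambda_1)^t$, combined with the Levine--Peres moment bound $\mathbb{E}\mathfrak{t}_H\le Cn^{2/d}$ converted into an eigenvalue lower bound $\lambda_1\ge cn^{-2/d}$.

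The execution differs in two minor but pleasant ways. For the $\sqrt{n}$ prefactor, the paper first passes to the $H$ maximizing $\mathbb{P}(\mathfrak{t}_H>t)$ so as to identify the survival probability with $\|P_H^t\|_\infty$, and then compares $\|\cdot\|_\infty$ and $\|\cdot\|_2$ operator norms; your Cauchy--Schwarz computation $\sum_y p_t^H(0,y)\le\sqrt{n}\,\sqrt{p_{2t}^H(0,0)}$ is more direct and avoids that extremal detour. For the eigenvalue bound, the paper sums the pointwise lower bound $\mathbb{P}(\mathfrak{t}_H>t)\ge\alpha^t$ (valid for the extremal $H$) to get $1/(1-\alpha)\le\mathbb{E}\mathfrak{t}_H$, whereas you invoke the standard comparison $1/\lambda_1\le\max_x\mathbb{E}_x\mathfrak{t}_H$; these are really the same inequality, though your justification (``the $1/\lambda_1$ term dominates'') is a bit loose---the clean argument is to pair $G_H\mathbf{1}$ with the nonnegative ground state $\phi_1$. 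One small caveat: the inequality $(1-\lambda)^{2t}\le e^{-2\lambda t}$ you quote fails for $\lambda>1$, but what you actually need is $\mu_i^{2t}\le\alpha^{2t}\le e^{-2\lambda_1 t}$, which follows from Perron--Frobenius for the nonnegative matrix $P_H$ and $\lambda_1\in(0,1)$.
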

\noindent
Note that the order of the exponent in Proposition~\ref{RW_exit_time} is sharp by considering $H = [-M,M]^d$, which has $\mathbb{P}_p(\mathfrak{t}_H > t) \ge c_4 \exp\left( -C_3t/M^2\right)$ by \cite[Cor.~6.9.5,~6.9.6]{LL}, and taking $(2M+1)^d \leq n$.

We now show how to complete the proof of the upper bound in Theorem~\ref{thm: main_thm} given this proposition. Afterward, we will finish by proving the proposition. Applying it to \eqref{eq: beginning_of_end}, using the fact that there are at most $(2de)^j$ many connected subgraphs with $j$ vertices containing 0, and the fact that if $j=1$ then $\mathfrak{t}_H\le 1$, we obtain
\[
\mathbb{P}_p(\tau>t) \le \sum_{j=2}^{(4t+1)^d}  \left(4de\sqrt{p(1-p)}\right)^{j} \, \sqrt{j} \exp\left( -c_8t j^{-\frac{2}{d}}\right).
\]
(When $d=1$, the number of such subgraphs is at most $j+1$.) If $p < p_0$, the term $4de\sqrt{p(1-p)}$ is less than 1 (and for $d=1$ the corresponding term $2\sqrt{p(1-p)}$ is also less than 1 for $p<p_0$), and is bounded away from $1$ for $p$ in any closed subinterval of $[0,p_0)$, so we can bound the last expression above by
\begin{align*}
&(4de)^2 p\sqrt{t} \sum_{j=2}^{\left\lceil t^{\frac{d}{d+2}}\right\rceil}  \exp\left( -c_8t j^{-\frac{2}{d}}\right)\  +\ (4t+1)^{\frac{d}{2}}\sum_{j=\left\lceil t^{\frac{d}{d+2}}\right\rceil+1}^\infty  \left(4de\sqrt{p(1-p)}\right)^{j} \\
\le~& (4de)^2 p t^{\frac{3}{2}} \exp\left( -c_8t^\frac{d}{d+2} \right) \ +\  \frac{(4t+1)^{\frac{d}{2}}}{1- 4de\sqrt{p(1-p)}} \left(4de\sqrt{p(1-p)}\right)^{t^{\frac{d}{d+2}}+1}\\
\leq ~& p\cdot \exp\left( - c_2 t^{\frac{d}{d+2}}\right),
\end{align*}
for $c_2$ chosen sufficiently small and $t \geq C$, where $c_2,C$ can be chosen to hold for all $p$ in any fixed, closed subinterval of $[0,p_0)$. To handle $t \in [1,C)$, we write
\begin{align*}
P_p(\tau>t) 
&\leq \mathbb{P}_p(\tau > 1)\\
&\leq p\mathbb{P}_p(\text{at least one nonzero site within $\ell^1$-distance 2 of 0 is initially a car})\\
&\leq p (1-(1-p_0)^{4d^2})\\
&\leq pe^{-c_2 C^{d/(d+2)}}\\
&\leq pe^{-c_2 t^{d/(d+2)}}
\end{align*}
where $c_2$ is possibly chosen smaller (but still bounded away from 0 on any closed subinterval of $[0,p_0)$). This gives the upper bound in Theorem~\ref{thm: main_thm}.

We are therefore left to prove the proposition.
\begin{proof}[Proof of Proposition~\ref{RW_exit_time}]
For integers $n,t \geq 1$, let $H = H_n(t)$ be a subgraph of $\mathbb{Z}^d$ with $n$ vertices that maximizes $\mathbb{P}_p(\mathfrak{t}_H > t)$. Clearly $H$ contains the origin. We will view a random walk on $H$ as a Markov chain, so we define the matrix
\[
P_H \text{ with entries } (p(x,y))_{x,y \in H}.
\]
Here, $p(x,y)$ is the transition probability from $x$ to $y$, which for our simple symmetric random walk is $1/(2d)$ if $x$ and $y$ are neighbors, and 0 otherwise. Note that $P_H$ is symmetric but its rows do not necessarily sum to 1, since transitions out of $H$ are not represented. Write $P_H^t$ for the $t$-th product of $P_H$ with itself, with entries $(p_H^t(x,y))$. Then
\begin{equation}\label{eq: P_1}
\mathbb{P}_p(\mathfrak{t}_H > t) = \sum_{y \in H}p_H^t(0,y).
\end{equation}
Note that the right side actually equals $\max_{x \in H} \sum_{y \in H} p_H^t(x,y)$. Indeed, if $x \in H$, then
\[
\sum_{y \in H} p_H^t(x,y) = \mathbb{P}_p(\mathfrak{t}_H^x > t) = \mathbb{P}_p(\mathfrak{t}_{H^x} > t).
\]
Here, $\mathfrak{t}_H^x$ is the exit time from $H$ of a random walk started at $x$, and $H^x$ is the subgraph obtained from $H$ by shifting $x$ to the origin. Since $H^x$ is a subgraph of $\mathbb{Z}^d$ with $n$ vertices, maximality of $H$ implies that $\sum_{y \in H} p_H^t(x,y) \leq \sum_{y \in H} p_H^t(0,y)$. In conclusion, 
\[
\mathbb{P}_p(\mathfrak{t}_H > t) = \max_{x \in H} \sum_{y \in H} p_H^t(x,y).
\]
The term on the right is the maximal row-sum of $P_H^t$, which equals 
\[
\|P_H^t\|_\infty = \sup_{z \neq 0} \frac{\|P_H^tz\|_\infty}{\|z\|_\infty},
\]
the operator norm of the matrix $P_H^t$ considered as a map from $\ell^\infty$ to $\ell^\infty$. We conclude from this and \eqref{eq: P_1} that
\begin{equation}\label{eq: P_2}
\mathbb{P}_p(\mathfrak{t}_H > t) = \|P_H^t\|_\infty.
\end{equation}

We next relate the infinity-norm of $P_H^t$ to its eigenvalues. Letting $\alpha = \alpha_H$ be the largest eigenvalue of $P_H$, we claim that
\begin{equation}\label{eq: P_3}
\alpha^t \leq \mathbb{P}_p(\mathfrak{t}_H>t) \leq \sqrt{n}\alpha^t.
\end{equation}
The lower bound follows from \eqref{eq: P_2} by letting $v$ be an eigenvector for $P_H$ and noting that since $v$ is also an eigenvector for $P_H^t$ with eigenvalue $\alpha^t$,
\[
\|P_H^t\|_\infty \geq \frac{\|P_H^t v\|_\infty}{\|v\|_\infty} = \alpha^t.
\]
For the upper bound, since $\alpha^t = \sup_{z \neq 0} \|P_H^t z\|_2/\|z\|_2 =: \|P_H^t\|_2$, we can apply \eqref{eq: P_2} along with the estimate
\[
\|P_H^t z\|_\infty \leq \|P_H^tz\|_2 \leq \|P_H^t\|_2 \|z\|_2 \leq \sqrt{n}\|P_H^t\|_2 \|z\|_\infty
\]
to obtain $\|P_H^t\|_\infty \leq \sqrt{n}\|P_H^t\|_2 = \sqrt{n}\alpha^t$.

After \eqref{eq: P_3}, we need a theorem of Levine--Peres \cite[Thm.~1.2]{LP} on the expected exit time of domains by random walk. It is a form of a spectral isoperimetric inequality, and implies that there exists $C_9>0$ such that
\begin{equation}\label{eq: LP}
\sup_{K \subset \mathbb{Z}^d, |K|=n} \mathbb{E}_p\mathfrak{t}_K \leq C_9 \mathbb{E}_p \mathfrak{t}_{B_n} \text{ for all } n \geq 1,
\end{equation}
where $B_n$ is the ``lattice ball'' of cardinality $n$. This is defined as the subgraph induced by the ``first $n$ points in an ordering of points in $\mathbb{Z}^d$ according to increasing distance from the origin.'' (Actually the result is more precise, but we only need the existence of $C_9$.) Note that $B_n \subset [-C_{10} n^{1/d},C_{10}n^{1/d}]^d$ for some constant $C_{10}>0$ and therefore 
\[
\mathbb{E}_p \mathfrak{t}_{B_n} \leq \mathbb{E}_p \mathfrak{t}_{[-C_{10}n^{1/d}, C_{10}n^{1/d}]^d}.
\]
By integrating the upper bound of \cite[Cor.~6.9.6]{LL} we obtain the standard estimate that the right side of the above inequality is bounded above by $C_{11}n^{2/d}$. Putting this in \eqref{eq: LP}, we get
%
%
\begin{equation}\label{eq: our_LP}
\sup_{K \subset \mathbb{Z}^d, |K| = n} \mathbb{E}_p \mathfrak{t}_K \leq C_{11} n^{\frac{2}{d}}.
\end{equation}

Last, we use the above tools to complete the proof. Using the lower bound of \eqref{eq: P_3} and inequality \eqref{eq: our_LP}, we obtain
\[
C_{11}n^{\frac{2}{d}} \geq \sum_{t \geq 0} \mathbb{P}_p(\mathfrak{t}_H > t) \geq \sum_{t\geq 0} \alpha^t = \frac{1}{1-\alpha},
\]
and solving this for $\alpha$, we see that
\[
\alpha \leq 1- \frac{1}{C_{11}n^{\frac{2}{d}}}.
\]
Plugging this into the upper bound of \eqref{eq: P_3}, we finish with
\begin{align*}
\mathbb{P}_p(\mathfrak{t}_H \geq t) \leq \sqrt{n} \alpha^t \leq \sqrt{n} \left( 1-\frac{1}{C_{11} n^{\frac{2}{d}}}\right)^t &= \sqrt{n} \exp\left( t \log \left( 1 - \frac{1}{C_{11} n^{\frac{2}{d}}}\right)\right) \\
&\leq \sqrt{n} \exp\left( - c_{12}tn^{-\frac{2}{d}}\right),
\end{align*}
which is the bound of Proposition~\ref{RW_exit_time}.
\end{proof}

\section{Proof of Theorem~\ref{thm:total_visits}}
Our proof uses Theorem~\ref{thm: main_thm} in combination with the following lemma, which appears with a small error in~\cite[Prop.~4.10]{DGJLS}.  We provide the corrected (short) proof for completeness.

\begin{lemma}\label{lem:tau-V_duality} For all $t\ge 1$ we have
$$
\mathbb{E}_pV_t = \sum_{s=1}^t \mathbb{P}_p(\tau\ge s).
$$
\end{lemma}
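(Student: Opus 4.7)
The plan is a standard mass-transport (or Fubini) argument exploiting translation invariance of $\mathbb{P}_p$ on $\mathbb{Z}^d$. First I would expand $V_t$ as a double sum of indicators, one over cars (indexed by their starting vertex $v$) and one over time steps $s=1,\ldots,t$:
\[
V_t \;=\; \sum_{v\in\mathbb{Z}^d}\sum_{s=1}^{t}\mathbf{1}\bigl\{\text{a car starts at }v,\text{ is unparked at time }s-1,\text{ and moves to }0\text{ at time }s\bigr\}.
\]
Taking expectations and swapping the sums by Tonelli gives $\mathbb{E}_p V_t = \sum_{s=1}^t \sum_{v} q_s(v)$, where $q_s(v)$ is the probability of the event inside the indicator.

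Next I would apply translation invariance. The map $\omega\mapsto\omega(\cdot+v)$ is measure-preserving under $\mathbb{P}_p$, so shifting by $-v$ sends the event "a car starts at $v$, is unparked at $s-1$, and moves to $0$ at $s$" to "a car starts at $0$, is unparked at $s-1$, and moves to $-v$ at $s$". Thus
\[
\sum_{v\in\mathbb{Z}^d} q_s(v) \;=\; \sum_{v\in\mathbb{Z}^d}\mathbb{P}_p\bigl(\text{a car starts at }0,\text{ is unparked at time }s-1,\text{ and is at }-v\text{ at time }s\bigr).
\]
Because an unparked car definitely occupies exactly one vertex at time $s$, summing the disjoint events over $v$ collapses to the single event that a car starts at $0$ and is still unparked at time $s-1$. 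That probability is precisely $\mathbb{P}_p(\tau\ge s)$, by the definition \eqref{def:liftspan} of $\tau$ (the indicators appearing in $\tau$ are monotone non-increasing in $s$, since a parked car stays parked, so $\tau\ge s$ if and only if the indicator at step $s$ is $1$).

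Combining these observations yields $\mathbb{E}_p V_t = \sum_{s=1}^t \mathbb{P}_p(\tau \ge s)$, as desired. There is no real obstacle here beyond bookkeeping: the only point that requires care is to check that the definition of a "visit" to the origin at time $s$ matches exactly the indicator inside $\tau^{(v)}$ after translation (namely, that the car is required to be unparked at time $s-1$, not at time $s$), so that summing over destinations $-v$ does not miss or double-count the event. This is the correction alluded to over the version in \cite{DGJLS}.
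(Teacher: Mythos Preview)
Your proof is correct and is essentially the same mass-transport argument as in the paper: both expand $V_t$ via indicators indexed by starting vertex and time, apply translation invariance to move the starting point to the origin, and then sum over destinations to recover $\mathbb{P}_p(\tau\ge s)$. The only cosmetic difference is that the paper works with the increments $V_s-V_{s-1}$ before summing in $s$, whereas you sum directly.
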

\begin{proof}
For each $x,y\in \mathbb{Z}^d$ and integers $s\ge 1$ define
$$
Z_s(x,y) = \mathbf{1}\{\text{a car is at $x$ initially and visits $y$ at time $s$}\}.
$$
Then, for $s\ge 1$,
$$
\sum_{y\in \mathbb{Z}^d} Z_s(y,0) = \#\{\text{cars visiting $0$ at time $s$}\} = V_s - V_{s-1}.
$$
Also,
$$
\sum_{y\in \mathbb{Z}^d} Z_s(0,y) = \mathbf{1}\{\text{a car starts at $0$ and is unparked at time $s-1$}\} = \{\tau\ge s\}.
$$
Taking expectations and using the fact that translations are measure preserving for $\mathbb{P}_p$, we have
\begin{align*}
\mathbb{E}_p (V_s - V_{s-1}) &= \sum_{y\in \mathbb{Z}^d} \mathbb{E}_p Z_s(y,0) =  \sum_{y\in \mathbb{Z}^d} \mathbb{E}_p Z_s(0,-y) =  \sum_{y\in \mathbb{Z}^d} \mathbb{E}_p Z_s(0,y) = \mathbb{P}_p(\tau\ge s).
\end{align*}
Summing from $s=1$ to $t$ and noting that $V_0 = 0$ finishes the proof.
\end{proof}

Observe  that $\mathbb{P}_p(\tau\ge 1) = p$.  The lower bound in Theorem~\ref{thm:total_visits} will therefore  follow from Lemma~\ref{lem:tau-V_duality} once we show that $\mathbb{P}_p(\tau\ge 2)\ge p^2$. Let $y$ be a neighbor of $0$ in $\mathbb{Z}^d$, denoted $y\sim 0$, and let $A_y$ be the event that there is a car initially at the origin and it moves to $y$ at time $1$. Let $B_y$ be the event that there is a car initially at $y$. Observe that $A_y$ and $A_z$ are disjoint whenever $y\ne z$, and $A_y$ and $B_y$  are independent, so
\begin{align*}
\mathbb{P}_p(\tau\ge  2) \ge \mathbb{P}_p\left(\bigcup_{y:  y\sim 0}  (A_y\cap B_y)\right) =  \sum_{y:y\sim0}\mathbb{P}_p(A_y \cap B_y) = \sum_{y:y\sim0} (p/2d)(p)  = p^2.
\end{align*}

For the upper bound, fix $p^*\in [0,p_0)$, and for each $p\in[0,p^*]$ let $c_2(p)$ be the constant that appears in the upper bound in Theorem~\ref{thm: main_thm}. Let $c_{13} =  \inf_{p\in[0,p^*]} c_2(p)>0$, so that for all $p\in[0,p^*]$ and all $t\ge 1$ we have
\begin{equation}\label{tau tail}
\mathbb{P}_p(\tau>t)\le p\exp\left(-c_{13} t^{\frac{d}{d+2}}\right).
\end{equation}
Let 
$$
k = \left\lceil \left(-\frac{2}{c_{13}} \log p \right)^{(d+2)/d}  \right\rceil.
$$
Since $\mathbb{P}_p(\tau>0) = p$ and 
$$
\mathbb{P}_p(\tau>1) \le \mathbb{P}_p(\text{initially, there is a car at $0$ and another car within two steps of $0$}) \le 4d^2p^2,
$$
and $\mathbb{P}_p(\tau>t)$ is decreasing in $t$, it follows that
\begin{equation}\label{small t bound}
\sum_{t=0}^k\mathbb{P}_p(\tau>t) \le p + k(4d^2p^2)\le p + C_{14}p^2 (\log(p^{-1}))^{(d+2)/d}
\end{equation}
for a sufficiently large constant $C_{14}$.

Applying the bound~\eqref{tau tail} and using that $e^{-c_{13}t^{\frac{d}{d+2}}}$ is decreasing in $t$, we have
\begin{align}
\nonumber \sum_{t=k+1}^\infty \mathbb{P}_p(\tau>t) &\le \sum_{t=k+1}^\infty p\exp\left(-c_{13} t^{\frac{d}{d+2}}\right)\\
\nonumber &\le p\int_k^\infty e^{-c_{13}t^{\frac{d}{d+2}}}\, dt\\
\nonumber &= p \int_{c_{13}k^{d/(d+2)}}^\infty \frac{d+2}{d(c_{13})^{(d+2)/d}} u^{2/d} e^{-u}\, du\\
\label{integral bound} &= C_{15}\,p \int_{c_{13}k^{d/(d+2)}}^\infty u^{2/d} e^{-u}\, du.
\end{align}
Now assume $p$ is such that $-2\log p \ge 2$, and note that $c_{13}k^{d/(d+2)}\ge -2\log p$. For $d=1$, by evaluating the integral, \eqref{integral bound} is at most
\begin{align*}
5C_{15}\, p(c_{13}k^{1/3})^2 \exp(-c_{13}k^{1/3}).
\end{align*}
Plugging in our choice of $k$ and noting that $u^2 e^{-u}$ is a decreasing function for $u\ge 2$, the last expression is at most
$$
20 C_{15}\,p^3(\log p)^2 \le C_{16} p^2 (\log(p^{-1}))^{3}
$$
for a large enough constant $C_{16}$. Combining this with~\eqref{tau tail} and~\eqref{small t bound} finishes the proof for $d=1$.  For $d\ge 2$, since $u^{2/d}\le u$ for $u\ge 1$ we have that~\eqref{integral bound} is at most
$$
C_{15}\,p\int_{c_{13}k^{d/(d+2)}}^\infty u e^{-u}\, du\le 2C_{15}\,  p (c_{13}k^{d/(d+2)})\exp(-c_{13}k^{d/(d+2)}),
$$
and plugging in our choice of $k$ and noting that $ue^{-u}$ is decreasing for $u\ge 1$ gives an upper bound of
$$
4 C_{15}\, p^3 \log p^{-1} \le C_{17} p^2 (\log p^{-1})^{(d+2)/d}
$$
for a large  enough constant $C_{17}$. Combining this with~\eqref{tau tail} and~\eqref{small t bound} finishes the proof for $d\ge 2$. \qed

\bigskip
\noindent
{\bf Acknowledgements.} The research of M.~D. is supported by an NSF CAREER grant. D.~S. is partially supported by the NSF grant CCF--1740761. H.~L. is partially supported by the NSF grant DMS--2010035. H.~L. thanks Yuval Peres for pointing out the reference~\cite{LP}.

\end{document}